\documentclass[12pt]{amsart} 
\usepackage{amsmath, amssymb, color, fullpage, tikz}
\setlength{\parindent}{0in}

\newtheorem{theorem}{Theorem}[section]
\newtheorem{lemma}[theorem]{Lemma}

\newtheorem{corollary}[theorem]{Corollary}
\newtheorem{proposition}[theorem]{Proposition}
\newtheorem{definition}[theorem]{Definition}
\newtheorem{remark}[theorem]{Remark}

\def\dz{\hbox{$\mathbb Z$}}
\def\dn{\hbox{$\mathbb N$}}

\def\C{\hbox{$\mathcal C$}}

\def\({\left (}
\def\){\right )}

\def\Ra{\Rightarrow}
\def\La{\Leftarrow}
\def\to{\rightarrow}

\newcommand{\im}[1]{\text{Im}\(#1\)}

\def\C{\hbox{$\mathbb C$}}

\newcommand{\ds}{\displaystyle}

\newcommand{\ba}{\begin{align*}}
\newcommand{\eas}{\end{align*}}

\newcommand{\mylabel}[1]{\label{#1}}

\begin{document}

\title{Products of Nearly Holomorphic Eigenforms}

\author{Jeffrey Beyerl}
\address[Jeffrey Beyerl]{
Department of Mathematical Sciences\\
Clemson University\\
Box 340975 Clemson, SC 29634-0975
}
\email{jbeyerl@clemson.edu}

\author[James]{Kevin James}
\address[Kevin James]{
Department of Mathematical Sciences\\
Clemson University\\
Box 340975 Clemson, SC 29634-0975
}
\email{kevja@clemson.edu}
\urladdr{www.math.clemson.edu/$\sim$kevja}

\author{Catherine Trentacoste}
\address[Catherine Trentacoste]{
Department of Mathematical Sciences\\
Clemson University\\
Box 340975 Clemson, SC 29634-0975
}
\email{ctrenta@clemson.edu}

 \author[Xue]{Hui Xue}
 \address[Hui Xue]{
Department of Mathematical Sciences\\
Clemson University\\
Box 340975 Clemson, SC 29634-0975
}
\email{huixue@clemson.edu}

\begin{abstract}
We prove that the product of two nearly holomorphic Hecke eigenforms is again a Hecke eigenform for only finitely many choices of factors.
\end{abstract}

\maketitle

\section{Introduction}
It is well known that the modular forms of a specific weight for the full modular group form a complex vector space, and the action of the algebra of Hecke operators on these spaces has received much attention.  For instance, we know that there is a basis for such spaces composed entirely of forms called Hecke eigenforms which are eigenvectors for all of the Hecke operators simultaneously.  Since the set of all modular forms (of all weights) for the full modular group can be viewed as a graded complex algebra, it is quite natural to ask if the very special property of being a Hecke eigenform is preserved under multiplication.  This problem was studied independently by Ghate \cite{Ghate00} and Duke \cite{Duke99} and they found that it is indeed quite rare that the product of Hecke eigenforms is again a Hecke eigenform.  In fact, they proved that there are only a finite number of examples of this phenomenon.  Emmons and Lanphier \cite{EmmonsLanphier07} extended these results to an arbitrary number of Hecke eigenforms.    The more general question of preservation of eigenforms through the Rankin-Cohen bracket operator (a bilinear form on the graded algebra of modular forms) was studied by Lanphier and Takloo-Bighash \cite{Lanphier08, Lanphier03} and led to a similar conclusion.  One can see \cite{Shimura76} or \cite{Zagier77} for more on these operators.

The work mentioned above focuses on eigenforms which are ``new'' everywhere.  It seems natural to extend these results to eigenforms which are not new.  In this paper, we consider modular forms which are ``old'' at infinity in the sense that the form comes from a holomorphic form of lower weight.  More precisely, we  show that the product of two nearly holomorphic eigenforms is an eigenform for only a finite list of examples (see Theorem~\ref{MainResult}).  It would also be interesting to consider the analogous question for forms which are old at one or more finite places.

\section{Nearly Holomorphic Modular Forms}

Let $\Gamma=SL_2(\dz)$ be the full modular group and let $M_k(\Gamma)$ represent the space of level $\Gamma$ modular forms of even weight $k$.  Let $f\in M_k(\Gamma)$ and $g\in M_l(\Gamma)$. Throughout $k,l$ will be positive even integers and $r,s$ will be nonnegative integers.

\begin{definition}
We define Maass-Shimura operator $\delta_k$ on $f\in M_k(\Gamma)$ by 
\[\delta_k(f)=\ds\(\frac{1}{2\pi i}\(\frac{k}{2i \im{z}} + \frac{\partial}{\partial z}\)f\)(z).\] 
Write $\delta_k^{(r)}:=\delta_{k+2r-2}\circ \cdots \circ \delta_{k+2} \circ \delta_k$, with $\delta_k^{(0)}=id$. A function of the form $\delta_k^{(r)} (f)$ is called a nearly holomorphic modular form of weight $k+2r$ as in \cite{Lanphier08}.

Let $\widetilde{M}_k(\Gamma)$ denote the space generated by nearly holomorphic forms of weight $k$ and level $\Gamma$. 
\end{definition}

  Note that the image of $\delta_k$ is contained in $ \widetilde{M}_{k+2}(\Gamma)$.  Also, the notation $\delta^{(r)}_k(f)$ will only be used when $f$ is in fact a holomorphic modular form.

We define the Hecke operator $T_n: \widetilde{M}_k(\Gamma)\to \widetilde{M}_k(\Gamma)$ following \cite{Lang95}, as
\[\left(T_n\left(f\right)\right)(z) = \ds n^{k-1}\sum\limits_{d|n}d^{-k}\sum\limits_{b=0}^{d-1} f\left(\frac{nz+bd}{d^2}\right).\]

A modular form (or nearly holomorphic modular form) $f\in \widetilde{M}_k(\Gamma)$ is said to be an eigenform if it is an eigenvector for all the Hecke operators $\{T_n\}_{n\in \dn}$.

The Rankin-Cohen bracket operator $[f,g]_j:M_k(\Gamma)\times M_l(\Gamma) \to M_{k+l+2j}(\Gamma) $ is given by
\begin{equation*}
[f,g]_j := \frac{1}{(2\pi i)^j}\sum_{a+b=j} (-1)^a \binom{j+k-1}{b}\binom{j+l-1}{a}f^{(a)}(z) g^{(b)}(z)
\end{equation*}
where $f^{(a)}$ denotes the $a^{th}$ derivative of $f$.

\begin{proposition}\mylabel{product-rule-v2} Let $f\in M_k(\Gamma)$, $g\in M_l(\Gamma)$. Then
\[\delta^{(r)}_k(f)\delta^{(s)}_l(g) = \ds\sum\limits_{j=0}^{s} (-1)^j\binom{s}{j} \delta^{(s-j)}_{k+l+2r+2j}\left(\delta^{(r+j)}_k(f)g\right).\]
\end{proposition}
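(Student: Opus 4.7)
My plan is to prove the identity by induction on $s$, for all $r$ simultaneously. The base case $s=0$ is immediate: the right-hand side collapses to the single term $\delta^{(0)}_{k+l+2r}(\delta^{(r)}_k(f)g) = \delta^{(r)}_k(f)g$, matching the left side.

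The key tool is a Leibniz rule for $\delta$: if $F$ is a (nearly holomorphic) form of weight $a$ and $G$ of weight $b$, then
\[\delta_{a+b}(FG) = \delta_a(F)\,G + F\,\delta_b(G),\]
which I will verify in one line by splitting $\frac{a+b}{2i\,\im{z}} = \frac{a}{2i\,\im{z}} + \frac{b}{2i\,\im{z}}$ in the definition of $\delta_{a+b}(FG)$ and applying the ordinary product rule to $\partial/\partial z$. Since this calculation is purely formal, it applies to nearly holomorphic forms as well. Taking $F = \delta^{(r)}_k(f)$ (weight $k+2r$) and $G = \delta^{(s)}_l(g)$ (weight $l+2s$) and solving for the cross term gives
\[\delta^{(r)}_k(f)\,\delta^{(s+1)}_l(g) = \delta_{k+l+2r+2s}\bigl(\delta^{(r)}_k(f)\,\delta^{(s)}_l(g)\bigr) - \delta^{(r+1)}_k(f)\,\delta^{(s)}_l(g),\]
which is the recursion that drives the induction.

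For the inductive step, I substitute the $s$-level identity into each of the two terms on the right (using $r$ in the first and $r+1$ in the second). Each summand in the expansion of $\delta^{(r)}_k(f)\delta^{(s)}_l(g)$ has total weight $k+l+2r+2s$, so applying $\delta_{k+l+2r+2s}$ simply bumps $\delta^{(s-j)}_{k+l+2r+2j}$ up to $\delta^{(s+1-j)}_{k+l+2r+2j}$ termwise. After the reindexing $j \mapsto j-1$ in the expansion of $\delta^{(r+1)}_k(f)\delta^{(s)}_l(g)$, the two sums line up (with signs $(-1)^j$ agreeing) and Pascal's identity $\binom{s}{j}+\binom{s}{j-1}=\binom{s+1}{j}$ yields precisely the formula at level $s+1$. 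The only real obstacle here is bookkeeping: carefully tracking the subscripted weights in the nested $\delta$ operators and checking that the $(-1)^j$ signs align so that Pascal's identity applies cleanly. No deeper analytic input is needed.
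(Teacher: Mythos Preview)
Your proposal is correct and is essentially the same argument the paper sketches: the paper's one-line proof records exactly your Leibniz identity $\delta_{k+l+2r}(\delta^{(r)}_k(f)g)=\delta^{(r+1)}_k(f)g+\delta^{(r)}_k(f)\delta_l(g)$ and then says ``use induction on $s$,'' which you have carried out in full with the Pascal-identity bookkeeping.
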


\begin{proof}
Note that, $\delta_{k+l+2r}\left(\delta^{(r)}_k(f)g\right) = \delta^{(r+1)}_k(f)g +\delta^{(r)}_k(f)\delta_l(g)$, and use induction on $s$.
\end{proof}

Combining the previous proposition and the Rankin-Cohen bracket operator gives us the following expansion of a product of nearly holomorphic modular forms.

\begin{proposition}\mylabel{coef-equation} Let $f\in M_k(\Gamma)$, $g\in M_l(\Gamma)$. Then
$$\delta^{(r)}_k(f)\delta^{(s)}_l(g) = \ds\sum\limits_{j=0}^{r+s}\frac{1}{\binom{k+l+2j-2}{j}}\left(\sum\limits_{m=\max(j-r,0)}^s (-1)^{j+m} \frac{\binom{s}{m}\binom{r+m}{j}\binom{k+r+m-1}{r+m-j}}{\binom{k+l+r+m+j-1}{r+m-j}}\right)\delta^{(r+s-j)}_{k+l+2j}\([f,g]_j(z)\).$$
\end{proposition}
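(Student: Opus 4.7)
The plan is to combine Proposition \ref{product-rule-v2} with a ``half-holomorphic'' auxiliary lemma and then collect the resulting coefficients. First, apply Proposition \ref{product-rule-v2} to rewrite
$$\delta^{(r)}_k(f)\delta^{(s)}_l(g)=\sum_{m=0}^{s}(-1)^m\binom{s}{m}\,\delta^{(s-m)}_{k+l+2r+2m}\!\left(\delta^{(r+m)}_k(f)\cdot g\right).$$
This reduces the problem to expressing each mixed product $\delta^{(N)}_k(f)\cdot g$, with $g$ still holomorphic, as a sum of $\delta$-derivatives of Rankin--Cohen brackets.

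The core step is the auxiliary lemma that, for every integer $N\ge 0$,
$$\delta^{(N)}_k(f)\cdot g=\sum_{j=0}^{N}\frac{(-1)^j\binom{N}{j}\binom{k+N-1}{N-j}}{\binom{k+l+2j-2}{j}\binom{k+l+N+j-1}{N-j}}\,\delta^{(N-j)}_{k+l+2j}\!\left([f,g]_j\right),$$
which is exactly the $s=0$ specialization of the proposition. I would prove this lemma via the closed-form expansion
$$\delta^{(N)}_k(f)=\sum_{a=0}^{N}\binom{N}{a}\frac{(k+N-1)!}{(k+a-1)!}\left(\frac{-1}{4\pi y}\right)^{N-a}\frac{f^{(a)}}{(2\pi i)^a},$$
combined with the Leibniz expansion of $[f,g]_j^{(c)}$ and the definition of the Rankin--Cohen bracket: matching coefficients of $f^{(a)}g^{(b)}y^{-c}$ on both sides (the left-hand side involves only $b=0$) pins down the coefficients uniquely and reduces the verification to a finite system of binomial identities. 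Alternatively, one may attempt an induction on $N$ via the Leibniz-type identity $\delta_{k+l+2N-2}(\delta^{(N-1)}_k(f)\cdot g)=\delta^{(N)}_k(f)\cdot g+\delta^{(N-1)}_k(f)\cdot \delta_l(g)$, though this forces a simultaneous induction to handle the term $\delta^{(N-1)}_k(f)\cdot \delta_l(g)$.

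With the lemma in hand, substitute it with $N=r+m$ into the expression from the first step and interchange summation. The composition identity $\delta^{(s-m)}_{k+l+2r+2m}\circ \delta^{(r+m-j)}_{k+l+2j}=\delta^{(r+s-j)}_{k+l+2j}$ (both sides are $\delta_{k+l+2r+2s-2}\circ\cdots\circ\delta_{k+l+2j}$) then shows that the coefficient of $\delta^{(r+s-j)}_{k+l+2j}([f,g]_j)$ reduces to a single inner sum over $m$ with $\max(j-r,0)\le m\le s$: the lower bound ensures $r+m\ge j$, without which the factor $\binom{r+m}{j}$ vanishes, and the upper bound is inherited from the first step. Pulling the common factor $1/\binom{k+l+2j-2}{j}$ outside and combining signs then yields exactly the expression claimed in the proposition. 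The main obstacle is the auxiliary lemma: both of the proof routes outlined above rely on a nontrivial binomial-coefficient identity to pin down the coefficients, whereas Steps~1 and~3 are routine bookkeeping once the lemma is established.
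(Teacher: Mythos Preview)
Your approach is essentially the same as the paper's: apply Proposition~\ref{product-rule-v2}, then expand each $\delta^{(r+m)}_k(f)\cdot g$ via the auxiliary identity, and finally reindex. The only difference is that the paper simply quotes your auxiliary lemma from Lanphier~\cite{Lanphier03} rather than proving it, so your sketch of the coefficient-matching or inductive argument for that formula is additional work beyond what the paper does; once the lemma is in hand, the remaining bookkeeping (the composition identity for iterated $\delta$'s and the range $\max(j-r,0)\le m\le s$) is exactly as you describe.
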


\begin{proof}
Lanphier \cite{Lanphier03} gave the following formula:
$$\delta_k^{(n)} (f(z)) \times g(z) = \sum_{j=0}^n \frac{(-1)^j \binom{n}{j}\binom{k+n-1}{n-j}}{\binom{k+l+2j-2}{j}\binom{k+l+n+j-1}{n-j}}\delta_{k+l+2j}^{(n-j)}\([f,g]_j(z)\).$$
Substituting this into the equation in Proposition \ref{product-rule-v2}, we obtain
\[\delta^{(r)}_k(f)\delta^{(s)}_l(g) = \ds\sum\limits_{m=0}^s (-1)^m\binom{s}{m}\delta^{(s-m)}_{k+l+2r+2m}\left[\ds\sum\limits_{j=0}^{r+m}\ds\frac{(-1)^j\binom{r+m}{j}\binom{k+r+m-1}{r+m-j}}{\binom{k+l+2j-2}{j}\binom{k+l+r+m+j-1}{r+m-j}}\delta^{(r+m-j)}_{k+l+2j}\([f,g]_j(z)\)\right].\]

Rearranging this sum we obtain the proposition.
\end{proof}


We will also use the following proposition which shows how $\delta_k$ and $T_n$ almost commute. 

\begin{proposition}\mylabel{delta-T-commute}
Let $f\in M_k(\Gamma)$.
Then
\[\left(\delta^{(m)}_k\left(T_nf\right)\right)(z) = \ds \frac{1}{n^m}\left(T_n\left(\delta^{(m)}_k(f)\right)\right)(z)\]
where $m\geq 0$.
\end{proposition}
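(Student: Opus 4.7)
The plan is to induct on $m$, with the base case $m=1$ doing all the real work and $m=0$ being trivial since $\delta_k^{(0)} = \text{id}$.

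For the base case $\delta_k(T_n f) = n^{-1} T_n(\delta_k f)$, I would expand both sides termwise using the definitions. Write $w_{d,b}(z) := (nz+bd)/d^2$ for the inner substitution appearing in $T_n$. The crucial observation is that $w_{d,b}$ is an affine map $z \mapsto Az + b/d$ with \emph{real} Jacobian $A = n/d^2$, so that $\partial_z \bar{w}_{d,b}(z) = 0$ and $\im{w_{d,b}(z)} = A\cdot \im{z}$. A short chain-rule calculation then yields
\[\delta_k\bigl[f \circ w_{d,b}\bigr](z) \;=\; A \cdot (\delta_k f)\bigl(w_{d,b}(z)\bigr),\]
since the factor $A$ emerges both from $\partial_z(f \circ w_{d,b})$ and from the rescaling of $\im{z}$ in the $\frac{k}{2i\,\im{z}}$ term. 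Plugging this back into $T_n f$ and comparing with the formula for $T_n(\delta_k f)$ — where $\delta_k f$ has weight $k+2$ and so contributes prefactor $n^{k+1} d^{-(k+2)}$ instead of $n^{k-1} d^{-k}$ — the global ratio works out to exactly $n$, giving the desired factor $1/n$.

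The inductive step is routine: assuming the identity holds at level $m$, I would apply $\delta_{k+2m}$ to both sides of $\delta_k^{(m)}(T_n f) = n^{-m} T_n\bigl(\delta_k^{(m)} f\bigr)$ and invoke the base-case identity with $\delta_k^{(m)} f$ in place of $f$, producing the extra factor of $1/n$ needed to reach $n^{-(m+1)}$. The definitional composition $\delta_k^{(m+1)} = \delta_{k+2m} \circ \delta_k^{(m)}$ then packages this cleanly.

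The one subtlety worth flagging — and the reason I would phrase the base case in the clean form displayed above rather than through a computation that presumes $f$ is holomorphic — is that for $m \geq 1$ the operand $\delta_k^{(m)} f$ appearing in the induction is nearly holomorphic rather than holomorphic. Fortunately, the chain-rule argument for the base case never invokes the Cauchy–Riemann equations: it uses only that $w_{d,b}$ is holomorphic with real Jacobian, which lets both $\partial_z$ and $1/\im{z}$ transform compatibly. Thus the identity applies uniformly to any smooth function, and the induction carries through without obstruction. This is really a bookkeeping matter rather than a genuine difficulty.
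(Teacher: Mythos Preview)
Your proof is correct and follows essentially the same approach as the paper: expand both sides of the $m=1$ case via the chain rule and the identity $\mathrm{Im}\,w_{d,b}(z)=(n/d^2)\,\mathrm{Im}(z)$, compare, and then induct. Your explicit remark that the base-case computation applies to any smooth operand (not just holomorphic $f$) is a point the paper leaves implicit when it writes ``Now induct on $m$,'' so your version is, if anything, slightly more careful on that front.
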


\begin{proof}
Write $F(z)=f\(\ds\frac{nz+bd}{d^2}\)$. Note that $\ds\frac{\partial}{\partial z}\left(F(z)\right)  = \ds\frac{n}{d^2}\frac{\partial f}{\partial z}\left(\frac{nz+bd}{d^2}\right)$, so that 
\begin{align*}\delta_k\left(T_nf\right)(z) &=&& \ds n^{k-1}\sum\limits_{d|n}d^{-k}\sum\limits_{b=0}^{d-1} \left(\frac{1}{2\pi i}\right)\left[\frac{k}{2i\mathrm{Im}(z)}F(z) + \frac{\partial}{\partial z}\left(F(z)\right)\right]  \\ &=&& \ds n^{k-1}\sum\limits_{d|n}d^{-k}\sum\limits_{b=0}^{d-1} \left(\frac{1}{2\pi i}\right)\left[\frac{k}{2i\mathrm{Im}(z)}f\ds\left(\frac{nz+bd}{d^2}\right) + \frac{n}{d^2}\frac{\partial f}{\partial z}\left(\frac{nz+bd}{d^2}\right) \right].
\end{align*}
Next one computes that $$T_n\left(\delta_k (f)\right)(z) = n\left[\ds n^{k-1}\sum\limits_{d|n}d^{-k}\sum\limits_{b=0}^{d-1}\left(\frac{1}{2\pi i}\right)\left(\frac{k}{2i\mathrm{Im}(z)}f\left(\frac{nz+bd}{d^2}\right) + \frac{n}{d^2}\frac{\partial f }{\partial z}\left(\frac{nz+bd}{d^2}\right)\right)\right]$$ from which we see

\[\left(\delta_k\left(T_nf\right)\right)(z) = \ds\frac{1}{n}\left(T_n\left(\delta_k (f)\right)\right)(z).\]

Now induct on $m$.
\end{proof}

We would like to show that a sum of eigenforms of distinct weight can only be an eigenform if each form has the same set of eigenvalues. In order to prove this, we need to know the relationship between eigenforms and nearly holomorhpic eigenforms. 


\begin{proposition}\mylabel{eform-iff-eform}
 Let $f\in M_k(\Gamma)$.  Then $\delta_k^{(r)} (f)$ is an eigenform for $T_n$ if and only if $f$ is.  In this case, if  $\lambda_n$ denotes the eigenvalue of $T_n$ associated to $f$, then the eigenvalue of $T_n$ associated to  $\delta^{(r)}_k (f)$ is $n^r\lambda_n$.
\end{proposition}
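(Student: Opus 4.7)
My plan is to reduce the proposition to two ingredients: the near-commutation relation from Proposition~\ref{delta-T-commute}, namely $\delta_k^{(m)} \circ T_n = n^{-m}\, T_n \circ \delta_k^{(m)}$, and the injectivity of the iterated Maass--Shimura operator $\delta_k^{(r)}$ on $M_k(\Gamma)$. Once these are in hand, both directions fall out quickly by transporting the eigenvalue equation across $\delta_k^{(r)}$.

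For the forward direction, I would suppose $T_n f = \lambda_n f$, apply $\delta_k^{(r)}$ to both sides, and invoke Proposition~\ref{delta-T-commute} on the left to obtain
\[T_n\bigl(\delta_k^{(r)}(f)\bigr) \;=\; n^r\, \delta_k^{(r)}(T_n f) \;=\; n^r \lambda_n\, \delta_k^{(r)}(f),\]
which yields both the eigenform property and the stated eigenvalue. Conversely, if $T_n\bigl(\delta_k^{(r)}(f)\bigr) = \mu_n\, \delta_k^{(r)}(f)$, then Proposition~\ref{delta-T-commute} and the linearity of $\delta_k^{(r)}$ let me rewrite this as
\[\delta_k^{(r)}\!\left(T_n f - \frac{\mu_n}{n^r} f\right) \;=\; 0.\]
Provided $\delta_k^{(r)}$ is injective on $M_k(\Gamma)$, this forces $T_n f = (\mu_n / n^r) f$, so $f$ is itself a $T_n$-eigenform; comparing with the forward direction then identifies $\mu_n = n^r \lambda_n$.

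The main obstacle is therefore the injectivity of $\delta_k^{(r)}$ on $M_k(\Gamma)$, which is the only non-formal step. I would establish it by tracking the top ``depth'' of $\delta_k^{(r)}(f)$ as a nearly holomorphic form. Setting $u := z - \bar z$ so that $\delta_k = (2\pi i)^{-1}\bigl(\partial_z + k/u\bigr)$, one sees that $\delta_k$ raises the pole order in $u$ by at most one, and hence $\delta_k^{(r)}(f)$ admits an expansion
\[\delta_k^{(r)}(f) \;=\; \frac{1}{(2\pi i)^r}\sum_{j=0}^{r} c_j(z)\, u^{-j}\]
with each $c_j(z)$ holomorphic in $z$. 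A short induction on $r$, using $\partial_z(u^{-j}) = -j\, u^{-j-1}$, shows that the top coefficient is given by the Pochhammer symbol: $c_r(z) = (k)_r\, f(z) = k(k+1)\cdots(k+r-1)\, f(z)$. Since $k$ is a positive even integer, $(k)_r \neq 0$, so $\delta_k^{(r)}(f) = 0$ forces $c_r \equiv 0$ and hence $f \equiv 0$. This completes the injectivity, and with it the proposition.
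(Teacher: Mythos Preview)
Your argument is correct and follows the paper's proof essentially verbatim: both directions are handled by transporting the eigenvalue equation through Proposition~\ref{delta-T-commute} and then invoking injectivity of $\delta_k^{(r)}$. The paper simply asserts that $\delta_k^{(r)}$ is injective, whereas you supply a clean proof by tracking the leading $u^{-r}$ coefficient and identifying it as $(k)_r f$; this is a genuine improvement in completeness over the paper's own presentation.
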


\begin{proof}
Assume $f$ is an eigenform.  So $\left(T_nf\right)(z) = \lambda_n f(z).$ Then applying $\delta_k^{(r)}$ to both sides and applying Proposition \ref{delta-T-commute} we obtain the following:
$$T_n\left(\delta_k^{(r)} (f)\right)(z) = n^r\lambda_n\left(\delta_k^{(r)} (f)\right)(z).$$
So $\delta_k^{(r)} (f)$ is an eigenform.  

Now assume that $\delta_k^{(r)} (f)$ is an eigenform.  Then $T_n\left(\delta_k^{(r)} (f)\right)(z) = \lambda_n\left(\delta_k^{(r)} (f)\right)(z).$ Using Proposition \ref{delta-T-commute}, we obtain
$\delta_k^{(r)}\left(T_nf\right)(z) = \ds \frac{\lambda_n}{n^r}\delta_k^{(r)}(f)(z) = \delta_k^{(r)}\left(\ds\frac{\lambda_n}{n^r}f\right)(z).$
Since $\delta_k^{(r)}$ is injective, 
$$\left(T_nf\right)(z) = \ds\frac{\lambda_n}{n^r}f(z).$$
Hence $f$ is an eigenform.  

\end{proof}


Now our result on a sum of eigenforms with distinct weights follow. 

\begin{proposition}\mylabel{thm-eform-sum-finite}
 Suppose that $\{f_i\}_i$ is a collection of modular forms with distinct weights~$k_i$.  Then $\ds\sum_{i=1}^{t}a_i \delta_{k_i}^{\(n-\frac{k_i}{2}\)}(f_i)$ ($a_i\in \C^*$) is an eigenform if and only if every $\delta_{k_i}^{\(n-\frac{k_i}{2}\)}(f_i)$ is an eigenform and each function has the same set of eigenvalues.
\end{proposition}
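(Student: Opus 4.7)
The reverse implication is essentially automatic: if each $\delta_{k_i}^{(n-k_i/2)}(f_i)$ is an eigenform with a common system of eigenvalues $\{\lambda_m\}_{m\in\dn}$, then any $\C$-linear combination is again a $T_m$-eigenform with eigenvalue $\lambda_m$. So the substance lies in the forward direction.

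Assume $F:=\sum_{i=1}^t a_i\delta_{k_i}^{(n-k_i/2)}(f_i)$ is a Hecke eigenform of weight $2n$ with $T_m F=\lambda_m F$ for every $m\in\dn$. The plan is to apply $T_m$ term by term and use Proposition \ref{delta-T-commute}: since $T_m\bigl(\delta_{k_i}^{(n-k_i/2)}(f_i)\bigr)=m^{n-k_i/2}\delta_{k_i}^{(n-k_i/2)}(T_m f_i)$, the eigenform condition $T_m F=\lambda_m F$ can be rewritten as
\[
\sum_{i=1}^t a_i\,\delta_{k_i}^{(n-k_i/2)}\!\Bigl(m^{n-k_i/2}T_m f_i-\lambda_m f_i\Bigr)=0.
\]
Each inner argument $g_i:=m^{n-k_i/2}T_m f_i-\lambda_m f_i$ lies in $M_{k_i}(\Gamma)$, so this is an identity of the form $\sum a_i\delta_{k_i}^{(n-k_i/2)}(g_i)=0$ with the $k_i$ pairwise distinct.

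The main obstacle is to conclude from this identity that each individual summand vanishes. I would invoke the Shimura decomposition of nearly holomorphic modular forms, namely $\widetilde M_{2n}(\Gamma)=\bigoplus_{r=0}^{n}\delta^{(r)}\bigl(M_{2n-2r}(\Gamma)\bigr)$; alternatively, one can argue directly that $\delta_k^{(r)}(f)$ is a polynomial in $1/\im{z}$ of exact degree $r$ whose leading coefficient is a nonzero multiple of $f$, so distinct values of $r=n-k_i/2$ force the images $\delta_{k_i}^{(n-k_i/2)}(M_{k_i})$ to be linearly independent subspaces of $\widetilde M_{2n}(\Gamma)$. Either way the sum above must vanish termwise, and since $a_i\neq 0$ and $\delta_{k_i}^{(n-k_i/2)}$ is injective, we get $g_i=0$, i.e.\ $T_m f_i=\bigl(\lambda_m/m^{n-k_i/2}\bigr)f_i$ for every $m$.

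Thus each $f_i$ is a holomorphic Hecke eigenform. Applying Proposition \ref{eform-iff-eform} in the forward direction, each $\delta_{k_i}^{(n-k_i/2)}(f_i)$ is a Hecke eigenform, and its eigenvalue under $T_m$ equals $m^{n-k_i/2}\cdot\bigl(\lambda_m/m^{n-k_i/2}\bigr)=\lambda_m$, independent of $i$. This gives the required common system of eigenvalues and completes the proof. The only non-formal step is the linear independence of the subspaces $\delta_{k_i}^{(n-k_i/2)}(M_{k_i})$, which is where the distinct-weight hypothesis is used in an essential way.
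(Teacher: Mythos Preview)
Your argument is correct. The key step---separating the identity $\sum_i a_i\,\delta_{k_i}^{(n-k_i/2)}(g_i)=0$ into its individual summands---is justified exactly as you say: $\delta_k^{(r)}(h)$ for $h\in M_k$ nonzero is a polynomial in $1/\im{z}$ of exact degree $r$ (the leading coefficient being a nonzero scalar times $h$, since $k>0$), and the exponents $r_i=n-k_i/2$ are distinct by hypothesis. Reading off the top-degree coefficient and inducting downward gives $g_i=0$ for all $i$.

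The paper's proof reaches the same conclusion but organizes the separation differently. It reduces by induction to $t=2$, then applies Proposition~\ref{eform-iff-eform} to strip a common power of $\delta$ so that the higher-weight summand becomes genuinely holomorphic; the eigen-equation then equates a holomorphic form with something in the image of $\delta$, forcing both sides to vanish. In other words, the paper uses only the coarse dichotomy ``holomorphic versus in the image of $\delta$'' together with induction, whereas you invoke the full filtration by $1/\im{z}$-degree in one stroke. Your route is cleaner and avoids both the induction on $t$ and the preliminary reduction via Proposition~\ref{eform-iff-eform}; the paper's route is slightly more self-contained in that it never needs to compute the exact $1/\im{z}$-degree of $\delta_k^{(r)}(h)$, only to observe that the image of $\delta$ on nonzero forms is nonholomorphic. (A small caution: the decomposition $\widetilde M_{2n}=\bigoplus_r \delta^{(r)}(M_{2n-2r})$ as stated needs care at $r=n$ and depends on exactly how $\widetilde M$ is defined; your $1/\im{z}$-degree argument is the safer justification here.)
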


\begin{proof}
By induction we only need to consider $t=2$. 

$(\La):$ If $T_n\(\delta_k^{(r)} (f)\)=\lambda \delta_k^{(r)} (f)$, and $T_n\( \delta_l^{\(\frac{k-l}{2}+r\)}(g)\) = \lambda \delta_l^{\(\frac{k-l}{2}+r\)}(g)$, then by linearity of $T_n$,
$$T_n\(\delta_k^{(r)} (f) + \delta_l^{\(\frac{k-l}{2}+r\)}(g)\) = \lambda \(\delta_k^{(r)} (f) + \delta_l^{\(\frac{k-l}{2}+r\)}(g)\).$$

$(\Ra):$ Suppose $\delta_k^{(r)} (f) + \delta_l^{\(\frac{k-l}{2}+r\)}(g)$ is an eigenform. Then by Proposition \ref{eform-iff-eform} and linearity of $\delta_k^{(r)}$, $f + \delta_l^{\(\frac{k-l}{2}\)}(g)$ is also an eigenform. Write
$$T_n\(f + \delta_l^{\(\frac{k-l}{2}\)}(g)\)=\lambda_n\(f + \delta_l^{\(\frac{k-l}{2}\)}(g)\).$$

Applying linearity of $T_n$ and Proposition \ref{delta-T-commute} this is 
$$T_n(f) + n^{\frac{k-l}{2}}\delta^{\(\frac{k-l}{2}\)}_{l}\(T_n(g)\)=\lambda_n f + \lambda_n \delta^{\(\frac{k-l}{2}\)}_{l}(g).$$ Rearranging this we get
$$T_n(f) - \lambda_n f = \delta^{\(\frac{k-l}{2}\)}_{l}\(\lambda_n g - n^{\frac{k-l}{2}}T_n (g)\).$$

Now note that the left hand side is holomorphic and of positive weight, and that the right hand side is either nonholomorphic or zero, since the $\delta$ operator sends all nonzero modular forms to so called nearly holomorphic modular forms. Hence both sides must be zero. Thus we have
$$T_n(f)=\lambda_n f \text{ and } T_n(g) =\lambda_n n^{\frac{-(k-l)}{2}} g.$$

Therefore $f$ is an eigenvector for $T_n$ with eigenvalue $\lambda_n$, and $g$ is an eigenvector for $T_n$ with eigenvalue $\lambda_n n^{\frac{-(k-l)}{2}}$. By Proposition \ref{eform-iff-eform} we have that $\delta_l^{\(\frac{k-l}{2}\)}(g)$ is an eigenvector for $T_n$ with eigenvalue $\lambda_n$. Therefore $f$ and $\delta_l^{\(\frac{k-l}{2}\)}(g)$ are eigenvectors for $T_n$ with eigenvalue $\lambda_n$. So  $\delta_k^{(r)} (f)$ and $\delta_l^{\(\frac{k-l}{2}+r\)}(g)$ must have the same eigenvalue with respect to $T_n$ as well. Hence for all $n\in \dn$,  $\delta_k^{(r)} (f)$ and $\delta_l^{\(\frac{k-l}{2}+r\)}(g)$ must be eigenforms with the same eigenvalues.

\end{proof}

Using the above proposition we can show that when two holomorphic eigenforms of different weights are mapped to the same space of nearly holomorphic modular forms that different eigenvalues are obtained. 

\begin{lemma}
Let $l<k$ and $f\in M_k(\Gamma), g\in M_l(\Gamma)$ both be eigenforms. Then $\delta_l^{\(\frac{k-l}{2}\)}(g)$ and $f$ do not have the same eigenvalues. 
\end{lemma}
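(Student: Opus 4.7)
The plan is to assume for contradiction that $f$ and $\delta_l^{(r)}(g)$ (with $r := (k-l)/2 \geq 1$) share their full system of Hecke eigenvalues, and to derive an impossible identity of gamma factors via the functional equations of the two associated $L$-functions. Normalizing the Fourier expansions $f = \sum a_n q^n$ and $g = \sum b_n q^n$ so that $a_1 = b_1 = 1$, Proposition~\ref{eform-iff-eform} translates the hypothesis into $a_n = n^r b_n$ for every $n \geq 1$, which summed over $n$ gives
\[
L(f,s) = \sum_{n\geq 1} a_n n^{-s} = \sum_{n \geq 1} b_n n^{r-s} = L(g,s-r).
\]

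The central step is then to combine this identity with the Hecke functional equations. For any Hecke eigenform $h \in M_m(\Gamma)$ on the full modular group, cuspidal or Eisenstein, the completed $L$-function $\Lambda(h,s) := (2\pi)^{-s}\Gamma(s) L(h,s)$ satisfies $\Lambda(h,s) = (-1)^{m/2}\Lambda(h, m-s)$. I would apply this with $m = k$ to $f$, and with $m = l$ at the point $t = s - r$ to $g$; because $l + 2r = k$, the reflected argument $l - (s-r)$ coincides with $k - s - r$. After substituting $L(f,s) = L(g,s-r)$ and $L(f,k-s) = L(g,k-s-r)$ into the two expressions for $L(f,s)/L(f,k-s)$ and cancelling the common factor $(2\pi)^{2s-k}$, one arrives at the meromorphic identity
\[
(-1)^{(k-l)/2} = \frac{\Gamma(s)\,\Gamma(k-s-r)}{\Gamma(s-r)\,\Gamma(k-s)} = \prod_{i=1}^{r} \frac{s-i}{k-s-i}.
\]

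To finish, I evaluate both sides at $s = 1$. The left side is the nonzero constant $(-1)^r$; on the right, the $i=1$ factor of the numerator contributes $(1-1)=0$, while the denominator $(k-2)(k-3)\cdots(k-r-1)$ is nonzero because $k = l + 2r \geq 2 + 2r > r + 1$ forces $k \notin \{2, 3, \ldots, r+1\}$, so no factor $k-1-i$ with $1\le i\le r$ vanishes. Thus the right side equals $0$, producing the contradiction $0 = (-1)^r$, which proves the lemma.

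The main obstacle is verifying the uniform shape of the functional equation across the cuspidal and Eisenstein cases. For cuspidal $h$ this is Hecke's classical theorem; for $h = E_m$ the same shape $\Lambda(E_m,s) = (-1)^{m/2}\Lambda(E_m,m-s)$ is derived from the factorization $L(E_m,s) = \text{const}\cdot \zeta(s)\zeta(s-m+1)$ together with the Riemann functional equation for $\zeta$, the poles of $L$ appearing symmetrically so as not to disturb the gamma-factor manipulation near $s=1$. Once this uniformity is in hand, the rest is formal.
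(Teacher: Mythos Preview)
Your argument is correct, but it proceeds along a completely different line from the paper. The paper's proof is one paragraph of elementary reasoning: from $a_n=n^r b_n$ (for $n\ge 1$, using multiplicity one) it rewrites $f(z)-c_0$ as $(2\pi i)^{-r}\partial_z^{\,r}g(z)$, and then observes that an iterated holomorphic derivative of a nonconstant modular form of positive weight is never modular (it picks up the usual quasi-modular error terms under $z\mapsto -1/z$), contradicting $f\in M_k(\Gamma)$. No $L$-functions, no gamma factors.

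Your route instead converts $a_n=n^r b_n$ into the shift $L(f,s)=L(g,s-r)$ and plays the two Hecke functional equations against each other to force the impossible identity $(-1)^r=\prod_{i=1}^r\frac{s-i}{k-s-i}$ of meromorphic functions. This is valid: the derivation only divides by $L(f,k-s)$, which is not identically zero, so the displayed equality holds as meromorphic functions, and a nonconstant rational function in $s$ cannot equal the constant $(-1)^r$ (your evaluation at $s=1$ makes this explicit; your check that $k>r+1$ keeps the denominator nonzero is fine since $l\ge 4$ on $\Gamma=SL_2(\Z)$). The one point you correctly flag is that the uniform functional equation $\Lambda(h,s)=(-1)^{m/2}\Lambda(h,m-s)$ must be checked separately in the Eisenstein case; it does hold, via $L(E_m,s)=\zeta(s)\zeta(s-m+1)$ and the Riemann functional equation together with the Legendre duplication formula, and the poles at $s=0,m$ do not interfere with the meromorphic identity. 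Compared with the paper's proof, yours is heavier machinery for the same conclusion, but it has the virtue of making visible exactly which invariant distinguishes the two forms---the gamma factor in the functional equation---whereas the paper simply appeals to the non-modularity of $\partial_z^r g$.
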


\begin{proof}
Suppose they do have the same eigenvalues. That is, say $g$ has eigenvalues $\lambda_n(g)$, then by Proposition \ref{eform-iff-eform} we are assuming that $f$ has eigenvalues $n^{\frac{k-l}{2}}\lambda_n(g)$. We then have from multiplicity one there are constants $c, c_0$ such that
\begin{align*}
	f(z) &= \sum_{n=1}^{\infty} c n^{\frac{k-l}{2}}\lambda_n(g) q^n + c_0 \\
	& =\frac{1}{(2 \pi i)^{(k-l)/2}} \frac{\partial^{(k-l)/2}}{\partial z^{(k-l)/2}} \sum_{n=1}^{\infty} c \lambda_n(g) q^n + c_0 \\
	& =\frac{1}{(2 \pi i)^{(k-l)/2}} \frac{\partial^{(k-l)/2}}{\partial z^{(k-l)/2}} g(z) + c_0 
\end{align*}
which says that $f$ is a derivative of $g$ plus a possibly zero constant. However, from direct computation, this is not modular. Hence we have a contradiction. 
\end{proof}

We shall need a special case of this lemma.
\begin{corollary}\mylabel{summands-not-eforms}
Let $k>l$ and $f\in M_k(\Gamma), g\in M_l(\Gamma)$. Then $\delta_l^{\(\frac{k-l}{2}+r\)}(g)$ and $\delta_k^{(r)}(f)$ do not have the same eigenvalues. 
\end{corollary}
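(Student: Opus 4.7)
The plan is to reduce this directly to the preceding lemma by stripping off the extra $\delta^{(r)}$ on each side. Since the statement refers to "the same eigenvalues," we may assume both $\delta_k^{(r)}(f)$ and $\delta_l^{\(\frac{k-l}{2}+r\)}(g)$ are eigenforms for all $T_n$; otherwise there is nothing to compare.

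First, I would invoke Proposition \ref{eform-iff-eform} twice: since $\delta_k^{(r)}(f)$ is an eigenform for $T_n$, so is $f$, and if $\lambda_n$ denotes the eigenvalue of $f$, then $\delta_k^{(r)}(f)$ has eigenvalue $n^r \lambda_n$. Similarly, since $\delta_l^{\(\frac{k-l}{2}+r\)}(g)$ is an eigenform for $T_n$, so is $g$ (with eigenvalue $\mu_n$, say), and $\delta_l^{\(\frac{k-l}{2}+r\)}(g)$ has eigenvalue $n^{\frac{k-l}{2}+r}\mu_n$.

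Next, suppose for contradiction that these two nearly holomorphic forms share the same eigenvalues for every $T_n$. Equating $n^r \lambda_n = n^{\frac{k-l}{2}+r} \mu_n$ and cancelling $n^r$ gives $\lambda_n = n^{\frac{k-l}{2}}\mu_n$ for all $n \in \dn$. But $n^{\frac{k-l}{2}}\mu_n$ is precisely the eigenvalue of $\delta_l^{\(\frac{k-l}{2}\)}(g)$ under $T_n$, again by Proposition \ref{eform-iff-eform}. Thus the holomorphic eigenform $f \in M_k(\Gamma)$ and the nearly holomorphic form $\delta_l^{\(\frac{k-l}{2}\)}(g)$ share all Hecke eigenvalues, contradicting the preceding lemma.

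The argument is essentially a bookkeeping step — the substantive content is already carried by the lemma — so there is no real obstacle; the only thing to be careful about is tracking the correct power of $n$ introduced by each application of $\delta$ and cancelling them consistently.
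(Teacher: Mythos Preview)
Your proposal is correct and matches the paper's approach: the paper presents this corollary with no separate proof, treating it as an immediate consequence of the preceding lemma via Proposition~\ref{eform-iff-eform}, and your argument spells out exactly that reduction.
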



From \cite{Lanphier03} we know that for eigenforms $f,g$, that $[f,g]_j$ is a eigenform only finitely many times. Hypothetically, however, it could be zero. In particular by the fact that $[f,g]_j = (-1)^j [g,f]_j$, $f=g$ and $j$ odd gives $[f,g]_j=0$. Hence we need the following lemma, where $E_k$ denotes the weight $k$ Eisenstein series normalized to have constant term $1$.
\begin{lemma}\mylabel{zero-cohen-brackets} Let $\delta_k^{(r)} (f)\in \widetilde{M}_{k+2r}(\Gamma)$, $\delta_l^{(s)} (g)\in \widetilde{M}_{l+2s}(\Gamma)$. In the following cases $[f,g]_j\neq 0$:

Case 1: $f$ a cusp form, $g$ not a cusp form.

Case 2: $f=g=E_k$, $j$ even.

Case 3: $f=E_k, g=E_l$, $k\neq l$.
\end{lemma}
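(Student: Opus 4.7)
The plan is to exhibit, in each case, a Fourier coefficient of $[f,g]_j$ that can be shown to be nonzero by direct computation. Setting $D=\frac{1}{2\pi i}\frac{\partial}{\partial z}$, so that $D\bigl(\sum a_n q^n\bigr)=\sum n a_n q^n$ and $\frac{1}{(2\pi i)^j}f^{(a)}g^{(b)}=D^af\cdot D^bg$ whenever $a+b=j$, the bracket rewrites cleanly as
\[
[f,g]_j \;=\; \sum_{a+b=j}(-1)^a\binom{j+k-1}{b}\binom{j+l-1}{a}\,D^af\cdot D^bg.
\]
The useful observation is that if $h=\sum_{n\ge M}c_n q^n$ with $M\ge 1$, then $D^t h$ also starts at $q^M$, while if $h$ has nonzero constant term then $D^t h$ for $t\ge 1$ starts at $q^1$.

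For Case~1, write $f=a_N q^N+O(q^{N+1})$ with $a_N\ne 0$, $N\ge 1$, and $g=b_0+O(q)$ with $b_0\ne 0$. For any $b\ge 1$, $D^bg$ has vanishing constant term, so the $q^N$ coefficient of $[f,g]_j$ picks up only the summand with $(a,b)=(j,0)$; this contribution equals $(-1)^j\binom{j+l-1}{j}N^j a_N b_0$, which is nonzero.

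For Case~2 with $j=0$ we simply have $E_k^2$, whose constant term is $1$. For even $j\ge 2$, the $q^1$ coefficient of $D^aE_k\cdot D^bE_k$ requires at least one factor to contribute its constant, forcing $a=0$ or $b=0$; writing $c_1=-2k/B_k\ne 0$ for the first Fourier coefficient of $E_k$, the $q^1$ coefficient of $[E_k,E_k]_j$ collects exactly the $(0,j)$ and $(j,0)$ summands, yielding $\bigl(1+(-1)^j\bigr)\binom{j+k-1}{j}c_1 = 2\binom{j+k-1}{j}c_1\ne 0$ because $j$ is even.

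Case~3 is the main obstacle. The analogous $q^1$ coefficient of $[E_k,E_l]_j$ is
\[
\binom{j+k-1}{j}c_1^{(l)}+(-1)^j\binom{j+l-1}{j}c_1^{(k)},
\]
and without the symmetry available when $k=l$ this expression is no longer forced to double, so it can vanish in low weight. My plan is to pass to the $q^p$ Fourier coefficient for varying primes $p$: the relevant sum decomposes into boundary contributions involving $c_p^{(k)}\sim p^{k-1}$ and $c_p^{(l)}\sim p^{l-1}$, together with interior contributions governed by products of divisor sums $\sigma_{k-1}(m)\sigma_{l-1}(p-m)$, and the distinct growth rates in $p$ arising from $k\ne l$ prevent simultaneous cancellation for all large primes. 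Carrying out this asymptotic argument precisely enough to survive the combinatorial weights $\binom{j+k-1}{b}\binom{j+l-1}{a}$ is the step I expect to be the most delicate.
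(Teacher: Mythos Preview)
Your Cases 1 and 2 are correct and essentially the paper's argument; in Case~1 you are even slightly more careful than the paper, allowing the cusp form to start at $q^N$ rather than tacitly assuming its $q$-coefficient is nonzero.

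Case 3 is where there is a genuine gap. You correctly write down the $q^1$ coefficient
\[
\binom{j+k-1}{j}c_1^{(l)}+(-1)^j\binom{j+l-1}{j}c_1^{(k)},
\]
but then abandon it and only \emph{outline} an asymptotic analysis of the $q^p$ coefficient for large primes $p$. That plan is plausible but is not carried out, and the ``interior'' contributions you mention (convolutions of divisor sums weighted by the binomials) would require real work to control. As written this is not a proof.

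The paper's route is to stay with the $q^1$ coefficient you already computed. Since $c_1^{(m)}=-2m/B_m$, the vanishing question becomes a comparison of $\dfrac{B_k}{k}\binom{j+k-1}{j}$ with $\dfrac{B_l}{l}\binom{j+l-1}{j}$. Writing $\dfrac{1}{k}\binom{j+k-1}{j}=\dfrac{(j+k-1)\cdots(k+1)}{j!}$, one sees that for $k>l$ the combinatorial factor attached to $B_k$ strictly dominates the one attached to $B_l$, and for even $k>l\ge 6$ one also has $|B_k|>|B_l|$; together these rule out cancellation. The residual low-weight cases with $l=4$ and small $j$ are finite in number and are checked directly; in fact only $(k,l,j)=(8,4,1)$ makes the $q^1$ coefficient vanish, and that single bracket is handled by inspecting another coefficient. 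So your intuition that the $q^1$ term ``can vanish in low weight'' is exactly right, but the remedy is a short finite check, not a passage to $q^p$ asymptotics.
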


\begin{proof}
Case 1: Write $f=\ds\sum_{j=1}^{\infty}A_j q^j, g=\ds\sum_{j=0}^{\infty}B_j q^j$. Then a direct computation of the $q$-coefficient of $[f,g]_j$ yields $$A_1 B_0 (-1)^j \binom{j+k-1}{j}\neq 0.$$

Case 2: Using the same notation, a direct computation of the $q$ coefficient yields $$A_0 B_1 \binom{j+l-1}{j} + A_1 B_0 \binom{j+k-1}{j}=2 A_0 A_1 \binom{j+k-1}{j} \neq 0.$$

Case 3: This is proven in \cite{Lanphier03} using $L$-series. We provide an elementary proof here. Without loss of generality, let $k>l$. A direct computation of the $q$ coefficient yields $A_0 B_1 \binom{j+l-1}{j} + A_1 B_0 \binom{j+k-1}{j}$. Using the fact that $A_0=B_0=1$, $A_1=k/B_k$, $B_1=l/B_l$, we obtain
$$\frac{-2l}{B_l}\binom{j+k-1}{j}+(-1)^j\frac{-2k}{B_k}\binom{j+l-1}{j}.$$

If $j$ is even, then both of these terms are nonzero and of the same sign. If $j$ is odd, then we note that for $l>4$,
$$\left | \frac{B_k}{k}\binom{j+k-1}{j} \right | = \left |\frac{(j+k-1) \cdots  (k+1)B_k}{j!}\right| > \left |\frac{(j+l-1) \cdots  (l+1)B_l}{j!}\right |=\left | \frac{B_l}{l}\binom{j+l-1}{j}\right |$$
using the fact that $|B_k|>|B_l|$ for $l>4$, $l$ even. For $l=4$, the inequality holds so long as $j>1$. For $j=1$ the above equation simplifies to $|B_k|>|B_l|$ which is true for $(k,l)\neq (8,4)$, with this remaining cases handled individually. For $j=0$, the Rankin-Cohen bracket operator reduces to multiplication.

\end{proof}

We will need the fact that a product is not an eigenform, given in the next lemma.

\begin{lemma}\mylabel{both-cusps}
Let $\delta_k^{(r)} (f)\in \widetilde{M}_{k+2r}(\Gamma)$, $\delta_l^{(s)} (g)\in \widetilde{M}_{l+2s}(\Gamma)$ both be cuspidal eigenforms. Then $\delta_k^{(r)} (f) \delta_l^{(s)} (g)$ is not an eigenform.
\end{lemma}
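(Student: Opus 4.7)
The plan is to reduce the product, via Proposition \ref{coef-equation}, to a single Rankin-Cohen bracket and then to rule that case out by inspecting the first Fourier coefficient. Writing
\begin{equation*}
\delta^{(r)}_k(f)\delta^{(s)}_l(g) = \sum_{j=0}^{r+s} C_j\, \delta^{(r+s-j)}_{k+l+2j}\bigl([f,g]_j\bigr),
\end{equation*}
where $C_j$ denotes the rational coefficient appearing in Proposition \ref{coef-equation}, I would regard the $j$th term as a nearly holomorphic form associated with the holomorphic form $[f,g]_j \in M_{k+l+2j}(\Gamma)$. Assume for contradiction that the product is an eigenform. Because the weights $k+l+2j$ are pairwise distinct, Proposition \ref{thm-eform-sum-finite} forces each nonzero summand to itself be an eigenform with a common set of eigenvalues, while Corollary \ref{summands-not-eforms} forbids two such summands built from distinct underlying holomorphic weights from sharing eigenvalues. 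Hence at most one of the summands is nonzero.

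Since the product is itself nonzero (both factors are nonzero, and the product of nonzero real-analytic functions is nonzero), exactly one index $j_0 \in \{0,\ldots,r+s\}$ contributes, giving
\begin{equation*}
\delta^{(r)}_k(f)\delta^{(s)}_l(g) = C_{j_0}\, \delta^{(r+s-j_0)}_{k+l+2j_0}\bigl([f,g]_{j_0}\bigr),
\end{equation*}
with $C_{j_0}\neq 0$ and $[f,g]_{j_0}\neq 0$ (using injectivity of $\delta$, as already invoked in Proposition \ref{eform-iff-eform}). Applying Proposition \ref{eform-iff-eform} to this identity forces $[f,g]_{j_0}$ to be a nonzero Hecke eigenform in $M_{k+l+2j_0}(\Gamma)$.

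The contradiction should now come from the leading Fourier coefficient. Since $f$ and $g$ are cusp forms, their $q$-expansions each begin at $q$, and the same is true of each $z$-derivative $f^{(a)}$ and $g^{(b)}$. Therefore every product $f^{(a)}g^{(b)}$ with $a+b=j_0$ has $q$-expansion beginning at $q^2$, so $[f,g]_{j_0} = O(q^2)$ and $a_1\bigl([f,g]_{j_0}\bigr)=0$. However, every nonzero Hecke eigenform in $M_{k'}\bigl(SL_2(\dz)\bigr)$ has nonvanishing first Fourier coefficient: normalized cuspidal eigenforms satisfy $a_1=1$, and the Eisenstein eigenform $E_{k'}$ satisfies $a_1=-2k'/B_{k'}\neq 0$. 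This contradicts $[f,g]_{j_0}\neq 0$ and completes the proof.

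The step I expect to require the most care is the reduction in the first paragraph, where one must verify that the hypotheses of Proposition \ref{thm-eform-sum-finite} really do fit the decomposition of Proposition \ref{coef-equation} after setting $n=(k+l+2r+2s)/2$, and that the vanishing of a summand is equivalent to $C_j[f,g]_j=0$. Once the collapse to a single Rankin-Cohen bracket has been established, the argument via the $q^1$-coefficient is elementary.
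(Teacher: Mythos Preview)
Your argument is correct, and the overall scaffolding (expand via Proposition~\ref{coef-equation}, then invoke Proposition~\ref{thm-eform-sum-finite}) matches the paper's. The genuine difference lies in how you dispose of the surviving Rankin--Cohen bracket. The paper simply cites \cite{Lanphier03} for the blanket statement that $[f,g]_j$ is never an eigenform when $f,g$ are both cuspidal, and then applies Proposition~\ref{thm-eform-sum-finite} directly to conclude. You instead supply the elementary observation that, since $f,g$ both vanish to order at least one at the cusp, every term $f^{(a)}g^{(b)}$ in $[f,g]_j$ is $O(q^2)$, so $a_1\bigl([f,g]_j\bigr)=0$; but any nonzero Hecke eigenform for $SL_2(\dz)$ has $a_1\neq 0$. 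This is more self-contained and avoids the external reference entirely. In fact, your $q^1$-coefficient observation already shows that \emph{no} nonzero $[f,g]_j$ is an eigenform, so the intermediate reduction to a single index $j_0$ via Corollary~\ref{summands-not-eforms} is not strictly needed: you could argue exactly as the paper does, replacing the citation of \cite{Lanphier03} by your Fourier-coefficient remark.
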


\begin{proof}
By Proposition \ref{coef-equation} we may write $\delta_k^{(r)} (f) \delta_l^{(s)} (g)$ as a linear combination of $\delta_{k+l+2j}^{(r+s-j)}\([f,g]_j\)$. Then from \cite{Lanphier03}, $[f,g]_j$ is never an eigenform. Hence by Proposition \ref{eform-iff-eform}, $\delta_{k+l+2j}^{(r+s-j)}\([f,g]_j\)$ is never an eigenform. Finally Proposition \ref{thm-eform-sum-finite} tells us that the sum, and thus $\delta_k^{(r)} (f) \delta_l^{(s)} (g)$ is not an eigenform. 
\end{proof}

Finally, this last lemma is the driving force in the main result to come: one of the first two terms from Proposition \ref{coef-equation} is nonzero.

\begin{lemma}\mylabel{first-two-terms}
Let $\delta_k^{(r)} (f)\in \widetilde{M}_{k+2r}(\Gamma)$, $\delta_l^{(s)} (g)\in \widetilde{M}_{l+2s}(\Gamma)$ both be eigenforms, but not both cusp forms. Then in the expansion given in Proposition \ref{coef-equation}, either the term including $[f,g]_{r+s}$ is nonzero, or the term including $[f,g]_{r+s-1}$ is nonzero.
\end{lemma}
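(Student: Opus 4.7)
My plan is to isolate the two top-index coefficients in the expansion of Proposition \ref{coef-equation} and show that at least one of the terms $C_{r+s}\,\delta^{(0)}_{k+l+2(r+s)}([f,g]_{r+s})$ and $C_{r+s-1}\,\delta^{(1)}_{k+l+2(r+s-1)}([f,g]_{r+s-1})$ is nonzero, by combining explicit evaluations of the inner binomial sums with Lemma \ref{zero-cohen-brackets}. First I evaluate $C_{r+s}$: for $j=r+s$ the inner sum collapses to the single index $m=s$ (since $\max(j-r,0)=s$), and direct substitution gives $C_{r+s}=(-1)^r/\binom{k+l+2(r+s)-2}{r+s}$, which is never zero. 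Since $\delta_{k+l+2(r+s)}^{(0)}$ is the identity, the first candidate term is nonzero precisely when $[f,g]_{r+s}\neq 0$.

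By Proposition \ref{eform-iff-eform} the holomorphic factors $f,g$ are themselves eigenforms, hence for the full modular group each is either cuspidal or a scalar multiple of an Eisenstein series. The hypothesis rules out both being cuspidal, leaving three subcases to handle. If exactly one of $f,g$ is cuspidal, Lemma \ref{zero-cohen-brackets}, Case 1, applies (after using the symmetry $[f,g]_j=(-1)^j[g,f]_j$ if $g$ is the cuspidal one); if both are Eisenstein with $k\neq l$, Case 3 applies; if $f=g=E_k$ and $r+s$ is even, Case 2 applies. In each of these situations $[f,g]_{r+s}\neq 0$, so the first term alone already witnesses the conclusion.

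The only residual possibility is $f=g=E_k$ with $r+s$ odd, where $[f,g]_{r+s}$ vanishes identically because $[f,f]_j=(-1)^j[f,f]_j$ forces vanishing for odd $j$. Here I turn to $C_{r+s-1}$: the inner sum has at most two contributing indices $m=s-1$ and $m=s$, and after combining them I expect the relevant numerator to simplify to a nonzero scalar multiple of $s(l+s-1)-r(k+r-1)$. With $k=l$ this factors as $(s-r)(k+r+s-1)$, vanishing exactly when $s=r$; but the parity constraint $r+s$ odd forces $s\neq r$, so $C_{r+s-1}\neq 0$. Since $r+s-1$ is even, Lemma \ref{zero-cohen-brackets}, Case 2, then yields $[f,g]_{r+s-1}\neq 0$, and injectivity of $\delta^{(1)}$ guarantees that the whole second term survives.

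The main obstacle is the explicit simplification of $C_{r+s-1}$ from its two-summand definition and the verification that its vanishing locus $s(l+s-1)=r(k+r-1)$ is incompatible with the parity condition that forces us into the residual subcase in the first place; everything else reduces to careful bookkeeping across the three cases of Lemma \ref{zero-cohen-brackets}.
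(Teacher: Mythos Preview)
Your proof is correct and follows essentially the same route as the paper's: both arguments split into the same three subcases, use Lemma~\ref{zero-cohen-brackets} to dispose of the cases where $[f,g]_{r+s}\neq 0$, and in the residual case $f=g=E_k$ with $r+s$ odd show that the coefficient of $[f,g]_{r+s-1}$ cannot vanish. Your explicit evaluation of the inner sum to (a constant times) $s(l+s-1)-r(k+r-1)$, which for $k=l$ factors as $(s-r)(k+r+s-1)$, is exactly the ``simplification'' the paper alludes to when it says vanishing would force $k=-(r+s)+1$; the parity hypothesis $r+s$ odd is what kills the $s=r$ branch in both arguments.
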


\begin{proof}There are three cases. 

Case 1: $f=g=E_k$. If $r+s$ is even, then via Lemma \ref{zero-cohen-brackets}, $[f,g]_{r+s}\neq 0$ and it is clear from Proposition \ref{coef-equation} that the coefficient of $[f,g]_{r+s}$ is nonzero so we are done. If $r+s$ is odd, then $[f,g]_{r+s-1}$ is nonzero. Now because $wt(f)=wt(g)$, the coefficient of $[f,g]_{r+s-1}$ is nonzero. This is due to the fact that if it were zero, after simplification we would have $k=-(r+s)+1 \leq 0$, which cannot occur.

Case 2: If $f$ is a cusp form and $g$ is not then by Lemma \ref{zero-cohen-brackets}, $[f,g]_{r+s}$, and thus the term including $[f,g]_{r+s}$ is nonzero. 

Case 3: If $f=E_k$, $g=E_l$, $k\neq l$. Again by Lemma \ref{zero-cohen-brackets}, $[f,g]_{r+s}$, and thus the term including $[f,g]_{r+s}$ is nonzero. 
\end{proof}

\section{Main Result}
Recall that $E_k$ is weight $k$ Eisenstein series, and let $\Delta_k$ be the unique normalized cuspidal form of weight $k$ for $k\in \{12, 16, 18, 20, 22, 26\}$. We have the following theorem.
\begin{theorem}\mylabel{MainResult}
Let $\delta_k^{(r)} (f)\in \widetilde{M}_{k+2r}(\Gamma)$, $\delta_l^{(s)} (g)\in \widetilde{M}_{l+2s}(\Gamma)$ both be eigenforms. Then $\delta_k^{(r)} (f) \delta_l^{(s)} (g)$ is not a eigenform aside from finitely many exceptions.  In particular $\delta_k^{(r)} (f) \delta_l^{(s)} (g)$ is a eigenform only in the following cases:
\begin{enumerate}
\item The 16 holomorphic cases presented in \cite{Ghate00} and \cite{Duke99}: 
\[\begin{array}{lll} E_4^2=E_8,  & E_4E_6=E_{10}, &  E_6E_8=E_4E_{10}=E_{14}, \end{array}\]
\[\begin{array}{lll}E_4\Delta_{12}=\Delta_{16}, &  E_6\Delta_{12}=\Delta_{18}, & E_4\Delta_{16}=E_8\Delta_{12}=\Delta_{20}, \end{array}\]
\[E_4\Delta_{18}=E_6\Delta_{16}=E_{10}\Delta_{12}=\Delta_{22},\] \[E_4\Delta_{22}=E_6\Delta_{20}=E_8\Delta_{18}=E_{10}\Delta_{12}=E_{14}\Delta_{12}=\Delta_{26}.\]

\item $\delta_4 \(E_4\) \cdot E_4 =\frac{1}{2} \delta_8 \(E_8\)$
\end{enumerate}
\end{theorem}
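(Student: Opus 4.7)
The plan is to combine the structural results already developed with the Lanphier--Takloo-Bighash finiteness theorem for Rankin--Cohen brackets. First, Lemma \ref{both-cusps} disposes of the case where both $f$ and $g$ are cuspidal (no product eigenforms arise there), so we may assume at least one of them is an Eisenstein series. Proposition \ref{coef-equation} then expands
$$\delta_k^{(r)}(f)\,\delta_l^{(s)}(g) \;=\; \sum_{j=0}^{r+s} c_j \, \delta_{k+l+2j}^{(r+s-j)}\bigl([f,g]_j\bigr)$$
as a linear combination of terms whose underlying holomorphic pieces $[f,g]_j$ have pairwise distinct weights $k+l+2j$, all assembled inside the common ambient space $\widetilde{M}_{k+l+2r+2s}(\Gamma)$.

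For this sum to be a Hecke eigenform, Proposition \ref{thm-eform-sum-finite} forces each nonzero summand to be an eigenform sharing the same eigenvalues, while Corollary \ref{summands-not-eforms} prohibits two summands of different underlying holomorphic weights from sharing eigenvalues. Hence at most one index $j_0$ can contribute a nonzero term. Lemma \ref{first-two-terms} guarantees that either $j=r+s$ or $j=r+s-1$ carries a nonzero contribution, so $j_0$ must be this top (or penultimate) index; by Proposition \ref{eform-iff-eform}, the bracket $[f,g]_{j_0}$ is itself a holomorphic Hecke eigenform. The Lanphier--Takloo-Bighash theorem \cite{Lanphier03} then says this happens for only finitely many quadruples $(f,g,k,l)$ and values of $j_0$, which immediately yields the asserted finiteness.

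To pin down the actual list, I would stratify by $r+s$. The layer $r=s=0$ reduces the expansion to $fg=[f,g]_0$, recovering precisely the 16 Ghate--Duke holomorphic products listed in part (1). For $r+s\ge 1$, one traverses Lanphier's finite list of bracket eigenforms and, for each candidate, verifies that the remaining coefficients $c_j[f,g]_j$ ($j\neq j_0$) all vanish --- either because $[f,g]_j=0$ (for instance, by the skew-symmetry $[f,g]_j=(-1)^j[g,f]_j$ when $f=g$ and $j$ is odd) or because the explicit rational $c_j$ from Proposition \ref{coef-equation} is zero. The only surviving candidate with $r+s\ge 1$ is $r=1$, $s=0$, $f=g=E_4$: here $[E_4,E_4]_1=0$ by skew-symmetry, $[E_4,E_4]_0=E_4^2=E_8$ is an eigenform by the Ghate--Duke list, and direct substitution into Proposition \ref{coef-equation} gives $c_0=1/2$, producing the identity $\delta_4(E_4)\cdot E_4=\tfrac{1}{2}\delta_8(E_8)$ of part (2).

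The main obstacle is the exhaustive sweep in the last step: the finiteness drops out cleanly from the structural argument, but certifying the complete list requires marching through each candidate coming out of Lanphier's inventory and either exhibiting a nonvanishing obstruction among the lower-index terms in the expansion or confirming that the candidate genuinely produces a product eigenform. The single new non-holomorphic example emerges essentially from the coincidence $E_4^2=E_8$ combined with the skew-symmetry vanishing $[E_4,E_4]_1=0$, which is a delicate conspiracy that does not recur elsewhere on Lanphier's list.
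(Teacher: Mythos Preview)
Your proposal is correct and follows essentially the same approach as the paper: expand via Proposition~\ref{coef-equation}, use Proposition~\ref{thm-eform-sum-finite} together with Corollary~\ref{summands-not-eforms} to force a single surviving term, invoke Lemma~\ref{both-cusps} for the cuspidal case and Lemma~\ref{first-two-terms} to pin the survivor to $j_0\in\{r+s,\,r+s-1\}$, then apply \cite{Lanphier03} for finiteness and finish with a finite enumeration. Your explanation of why the exceptional case $\delta_4(E_4)\cdot E_4$ survives (the conjunction of $[E_4,E_4]_1=0$ and $E_4^2=E_8$) is a bit more explicit than the paper's, which simply records the identity; otherwise the arguments coincide.
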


\begin{proof}
By Proposition \ref{coef-equation} we may write
$$\delta_{k}^{(r)} (f) \delta_{l}^{(s)} (g) = \sum_{j=0}^{r+s} \alpha_j \delta_{k+l+2j}^{\(r+s-j\)}\([f,g]_j\).$$

Now, by Proposition \ref{thm-eform-sum-finite} this sum is an eigenform if and only if every summand is an eigenform with a single common eigenvalue or is zero. Note that by Corollary \ref{summands-not-eforms}, $\alpha_j \delta_{k+l+2j}^{\(r+s-j\)}\([f,g]_j\)$ are always of different eigenvalues for different $j$. Hence for $\delta_{k}^{(r)} (f) \delta_{l}^{(s)}(g)$ to be an eigenform, all but one term in the summation must be zero and the remaining term must be an eigenform.

If both $f,g$ are cusp forms, apply Lemma \ref{both-cusps}. Otherwise from Lemma \ref{first-two-terms} either the term including $[f,g]_{r+s}$ or the term including $[f,g]_{r+s-1}$ is nonzero. By \cite{Lanphier03} this is an eigenform only finitely many times. Hence there are only finitely many $f,g,r,s$ that yield the entire sum, $\delta_k^{(r)} (f) \delta_l^{(s)} (g)$, an eigenform. Each of these finitely many quadruples were enumerated and all eigenforms found. See the following comments for more detail. 
\end{proof}

\begin{remark}In general $2\delta_k \(E_k\) \cdot E_k = \delta_{2k} \(E_k^2\)$. However, for $k\neq 4$, this is not an eigenform.
\end{remark}

Once we know that $\delta_k^{(r)} (f) \delta_l^{(s)} (g)$ is in general not an eigenform, we have to rule out the last finitely many cases. In particular consider each eigenform (and zero) as leading term $[f,g]_n$ in Proposition \ref{coef-equation}. From \cite{Lanphier03} we know that there are 29 cases with $g$ a cusp form (12 with $n=0$), 81 cases with $f,g$ both Eisenstein series (4 with $n=0$). By case we mean instance of $[f,g]_{n}$ that is an eigenform. We also must consider the infinite class with $f=g=E_k$ and $r+s$ odd, where $[f,g]_{r+s}=0$.

For the infinite class when $f=g$ and $r+s$ is odd we do have $[f,g]_{r+s}=0$. By Lemma \ref{first-two-terms} the $[f,g]_{r+s-1}$ term is nonzero. If $r+s-1=0$, then this is covered in the $n=0$ case. Otherwise $r+s-1 \geq 2$. This is an eigenform only finitely many times. In each of these cases one computes that the $[f,g]_0$ term is nonzero. Thus because there are two nonzero terms, $\delta_k^{(r)} (f) \delta_l^{(s)} (g)$ is not an eigenform.

The 16 cases with $n=0$ are the 16 holomorphic cases. Now consider the rest. In the last finitely many cases we find computationally that there are two nonzero coefficients: the coefficient of $[f,g]_0$, and $[f,g]_{r+s}$. Now $[f,g]_0\neq 0, [f,g]_{r+s}\neq 0$ and so in these cases $\delta_k^{(r)} (f) \delta_l^{(s)} (g)$ is not an eigenform.

The typical case, however, will involve many nonzero terms such as
\begin{align*}\delta_4 \(E_4\) \cdot \delta_4 \(E_4\) &= \frac{-1}{45} [E_4, E_4]_2 + 0\cdot \delta_{10} \([E_4,E_4]_1\) +\frac{10}{45} \delta_{8}^{(2)} \([E_4, E_4]_0\) \\ &= \frac{-1}{45} \(42\cdot E_4 \frac{\partial^2}{\partial z^2} E_4 - 49 \(\frac{\partial}{\partial z}E_4\)^2\) + \frac{10}{45} \delta_8^{(2)} \(E_8\),\end{align*}
$$\delta_6 \(E_6\) \cdot E_8 =\frac{-1}{14} [E_6,E_8]_1 + \frac{3}{7} \delta_{14} \([E_6, E_8]_0\) = \frac{-1}{14} \(6 E_6 \frac{\partial}{\partial z} E_8 -8 E_8 \frac{\partial}{\partial z} E_6\)+ \frac{3}{7} \delta_{14} \(E_6 E_8\) $$
which cannot be eigenforms because of the fact that there are multiple terms of different holomorphic weight.

\bibliographystyle{plain}
\bibliography{mybib}
\end{document}